\gdef\@journal{%
  \vbox to 5.5\p@{\noindent
    \parbox[t]{\textwidth}{\raggedleft\normalfont\baselineskip 9\p@{\ }
  \vss}%
}}
\newtheorem{thm}{Theorem}[section] % 1st argument is your name for it
\newtheorem{prop}[thm]{Proposition}
\newcommand{\Thm}[1]{Theorem~\ref{th:#1}}
\newcommand{\Prop}[1]{Proposition~\ref{prop:#1}}
\newcommand{\Eq}[1]{\eqref{eq:#1}}
\renewcommand{\a}{\alpha}\renewcommand{\b}{\beta}
\newcommand{\gm}{\gamma}
\newcommand{\eps}{\varepsilon}
\newcommand{\lm}{\lambda}
\newcommand{\om}{\omega}
\newcommand{\C}{\mathbb{C}}
\newcommand{\N}{\mathbb{N}}
\newcommand{\R}{\mathbb{R}}
\newcommand{\Z}{\mathbb{Z}}
\newcommand{\Log}{\operatorname{Log}}
\renewcommand{\Re}{\operatorname{Re}}\renewcommand{\Im}{\operatorname{Im}}
\title[Neo-classical inequality]% end with percent
 {Fractional order Taylor's series and the neo-classical inequality} % This is the full title of the paper
\author{Keisuke Hara and Masanori Hino}
\begin{document}
\maketitle

\begin{abstract}
We prove the neo-classical inequality with the optimal constant, which was conjectured by T.~J.~Lyons~[Rev. Mat. Iberoamericana~14 (1998) 215--310]. 
For the proof, we introduce the fractional order Taylor's series with residual terms.
Their application to a particular function provides an identity that deduces the optimal neo-classical inequality.
\end{abstract}
\section{Introduction}\label{sect:1}
In his celebrated study on the theory of rough paths~\cite{Ly98}, 
T.~J.~Lyons introduced the following neo-classical inequality, which was a key estimate to prove one of the fundamental theorems~\cite[Theorem~2.2.1]{Ly98}:
\begin{thm}[(Neo-classical inequality~{\cite[Lemma~2.2.2]{Ly98}})]\label{th:neo1}
Let $\a\in(0,1]$, $n\in\N=\{1,2,3,\dots\}$, $x\ge0$, and $y\ge0$.
Then, we have
\begin{equation}\label{eq:lyons}
\a^2\sum_{j=0}^n\binom{\a n}{\a j}x^{\a j}y^{\a(n-j)}\le (x+y)^{\a n}.
\end{equation}
\end{thm}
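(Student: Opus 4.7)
The clue from the abstract is to manufacture an \emph{identity} of the form
\[
(x+y)^{\alpha n}\;=\;\alpha^{2}\sum_{j=0}^{n}\binom{\alpha n}{\alpha j}x^{\alpha j}y^{\alpha(n-j)}\;+\;R_{\alpha,n}(x,y),
\]
in which $R_{\alpha,n}(x,y)\ge 0$ is an explicit integral remainder; the neo-classical inequality is then obtained by simply discarding $R_{\alpha,n}$.  By the $\alpha n$-homogeneity of both sides in $(x,y)$, I would first normalize $x+y=1$ to reduce to a one-variable problem, and then build the identity from a fractional-order Taylor formula with residual.

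The natural analytic tool is the Riemann--Liouville fractional integral and derivative,
\[
I^{\alpha}f(t)=\frac{1}{\Gamma(\alpha)}\int_{0}^{t}(t-s)^{\alpha-1}f(s)\,ds,\qquad D^{\alpha}=\frac{d}{dt}\,I^{1-\alpha}.
\]
Iterating the relation $f=I^{\alpha}(D^{\alpha}f)$ together with its boundary contribution, $n$ times in steps of $\alpha$, should yield a fractional Taylor formula of the shape
\[
f(t)=\sum_{j=0}^{n-1}\frac{t^{\alpha j}}{\Gamma(\alpha j+1)}\bigl(D^{\alpha j}f\bigr)(0^{+})+\frac{1}{\Gamma(\alpha n)}\int_{0}^{t}(t-s)^{\alpha n-1}\bigl(D^{\alpha n}f\bigr)(s)\,ds,
\]
with explicit residual.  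Because $D^{\alpha}$ acts on $s^{\alpha k}$ by pulling the exponent down by $\alpha$ with ratio $\Gamma(\alpha k+1)/\Gamma(\alpha(k-1)+1)$, fractional derivatives of natural test functions at the endpoints manufacture precisely the fractional binomials $\binom{\alpha n}{\alpha j}$, so that the remaining task is to identify the right test function.

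The $\alpha^{2}$ prefactor on the right of the inequality strongly suggests that \emph{two} such expansions are needed, one peeling off factors of $x^{\alpha}$ and the other of $y^{\alpha}$, so I would apply the formula twice---or equivalently to a convolution-type kernel built from $s^{\alpha j}(1-s)^{\alpha(n-j)}$---so that the iterated boundary terms collect exactly the desired binomial sum while the doubled residual becomes an integral over a simplex against a manifestly nonnegative kernel.  The hard step, and the one I expect to be the main obstacle, is \emph{sign control} in the residual: standard Riemann--Liouville integration by parts routinely destroys positivity, so the iterated Taylor formula must be set up so that every intermediate kernel stays nonnegative.  Once a test function is found whose top-order fractional derivative $D^{\alpha n}f$ has a definite sign compatible with the kernels produced by the iteration, dropping $R_{\alpha,n}\ge 0$ yields the inequality with the sharp constant $\alpha^{2}$---a constant that is not obtained by any ex post optimization but simply falls out of the identity.
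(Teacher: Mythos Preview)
Your proposal is a strategy sketch with a genuine gap, and it also misreads the target. The paper does not aim for the constant $\alpha^{2}$: it proves the \emph{sharper} inequality with $\alpha$ in place of $\alpha^{2}$ (Theorem~\ref{th:main}), from which the stated theorem follows a fortiori. Thus your heuristic that ``the $\alpha^{2}$ prefactor strongly suggests two expansions'' is a red herring---a \emph{single} identity already yields the factor $\alpha$, and Lyons' $\alpha^{2}$ was simply suboptimal. After normalizing $\lambda=x/y\in(0,1]$, the paper applies the residue theorem to $g(z)=\dfrac{\alpha}{2\pi i}\,(1+z)^{\alpha n}\,\dfrac{z^{\alpha-1}}{z^{\alpha}-\lambda^{\alpha}}$ on a keyhole contour in the unit disk; the remainder drops out as an explicit real integral whose sign is governed by $\sin\alpha\pi$, so positivity for $\alpha\in(0,1)$ is automatic and no iterated integration by parts is needed.

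The substantive gap in your plan is the choice of fractional derivative. The paper's ``derivative'' is $f^{\#}(\xi)=\dfrac{1}{2\pi i}\int_{C} f(z)\,z^{-\xi-1}\,dz$, a contour integral over the unit circle cut along $(-\infty,0]$, and the key input (Proposition~\ref{prop:nikou}) is that for $f(z)=(1+z)^{\alpha n}$ one has $f^{\#}(\xi)=\binom{\alpha n}{\xi}$ exactly for every real $\xi$. The Riemann--Liouville operator you propose does \emph{not} have this property: $D^{\alpha j}\bigl[(1+t)^{\alpha n}\bigr]$ evaluated at $t=0^{+}$ is a hypergeometric value, not $\binom{\alpha n}{\alpha j}$, because the RL operator acts on the full power series of $(1+t)^{\alpha n}$ rather than picking out a single generalized coefficient. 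So your claim that ``fractional derivatives of natural test functions at the endpoints manufacture precisely the fractional binomials'' is unjustified for the operator you chose, and with it the whole plan---including the unresolved ``hard step'' of sign control under iterated RL integration by parts---does not go through as written.
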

Here, in general, we define 
\begin{equation}\label{eq:binom}
\binom{w}{z}=\frac{\Gamma(w+1)}{\Gamma(z+1)\Gamma(w-z+1)}
\end{equation}
for $w\in\C\setminus\{-k\mid k\in\N\}$ and $z\in\C$, 
where $\Gamma(\cdot)$ is the Gamma function.
If $\infty$ appears in the denominator of the right-hand side of equation~\Eq{binom}, $\binom{w}{z}$ is regarded as $0$.

When $\a=1$, the equality holds in 
\Eq{lyons}, which is just the conventional binomial theorem.
Therefore, \Thm{neo1} is regarded as a generalisation of the binomial theorem.
The proof of \Thm{neo1} in \cite{Ly98} is rather technical and is derived from the maximum principle for sub-parabolic functions.
Based on the numerical evidence, Lyons conjectured that coefficient $\a^2$ in the left-hand side of 
inequality~\Eq{lyons} could be replaced by $\a$.
Thus far, only partial positive answers were known: E.~R.~Love~\cite{Lo98,Lo00} proved that the conjecture is true for $\a=2^{-k}$ ($k=1,2,3,\dots$)
and that coefficient $\a^2$ in 
\Eq{lyons} can be replaced by $\a/2$ in general.
His proof is based on the duplication formula of the Gamma function; it seems difficult to use his method to provide a complete answer to the conjecture.
Some detailed calculation along the lines of his method has also been carried out in \cite{Li04}.

In this paper, we provide an affirmative answer to Lyons' conjecture with more explicit information. 
Our method is different from the methods mentioned above; in our method, we use the basic theory of complex analysis.
The answer to the conjecture is stated as follows.
\begin{thm}\label{th:main}
Let $\a\in(0,1]$, $n\in\N$, $x\ge0$, and $y\ge0$.
Then, we have
\begin{equation}\label{eq:neo1}
\a\sum_{j=0}^n\binom{\a n}{\a j}x^{\a j}y^{\a(n-j)}\le (x+y)^{\a n}.
\end{equation}
The equality holds if and only if $\a=1$ or $x=y=0$.
\end{thm}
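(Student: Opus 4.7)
The plan is to exhibit an exact identity for the difference between the two sides of \Eq{neo1}, written as a manifestly nonnegative integral. By the homogeneity (degree $\a n$) of both sides, I may assume $y=1$, so the target reduces to
\begin{equation*}
S_n(x):=(1+x)^{\a n}-\a\sum_{j=0}^n\binom{\a n}{\a j}x^{\a j}\ge0,\qquad x\ge0,
\end{equation*}
with equality only when $\a=1$ or $x=0$.

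To produce the identity I would first develop the \emph{fractional order Taylor series with residual term} foreshadowed in the abstract. A natural starting point is a Hankel-type contour integral representation
\begin{equation*}
\binom{\a n}{\a j}=\frac{1}{2\pi i}\int_\gamma \frac{(1+z)^{\a n}}{z^{\a j+1}}\,dz
\end{equation*}
that correctly handles the branch of $z^{\a j}$ for non-integer $\a j$. Summing over $j=0,\dots,n$ turns the integrand into a geometric progression in $z^{-\a}$, producing a single closed-form contour integral for $\sum_{j=0}^n\binom{\a n}{\a j}x^{\a j}$. Collapsing the contour onto the branch cut of $z^\a$ (say along $(-\infty,0]$) introduces a jump of order $2i\sin(\pi\a)$, and after an appropriate substitution this should express $S_n(x)$ as a real integral $\int_0^\infty K_\a(x,t)\,dt$ with an explicit kernel. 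The coefficient $\a$ (as opposed to Lyons' $\a^2$) in \Eq{neo1} should emerge from the interplay between the factor $\sin(\pi\a)/\pi$ coming from the branch cut and the residues arising when summing the geometric progression, together with a careful tracking of the $\Gamma$-function prefactors.

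The principal difficulty will be verifying that the resulting kernel $K_\a(x,t)$ is pointwise nonnegative. I expect this to reduce to an elementary-but-nontrivial inequality estimating the modulus of a complex boundary value against a real positive quantity, with careful bookkeeping of the $\sin$ and $\cos$ factors on the two sides of the branch cut; the exact place where the transition from Lyons' $\a^2$ to the optimal $\a$ becomes possible is presumably lodged here. Once $K_\a(x,t)\ge0$ is established the equality case is automatic: $K_\a$ vanishes identically only when $\sin(\pi\a)=0$ (forcing $\a=1$ on $(0,1]$) or when $x=0$, matching the claim in \Thm{main}.
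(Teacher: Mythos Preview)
Your plan has the same architecture as the paper's proof: represent $\binom{\a n}{\a j}$ as a contour integral of $(1+z)^{\a n}/z^{\a j+1}$, sum, collapse the contour onto the cut, and read off an integral formula for the defect. Two specific ingredients are missing, and without them the ``principal difficulty'' you anticipate does not disappear.

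First, the paper does not sum the \emph{finite} geometric progression directly. After normalising so that $0<\lm\le1$ (you omit the reduction $x\le y$, which matters for convergence), it writes
\[
\sum_{j=0}^n\binom{\a n}{\a j}\lm^{\a j}
=\sum_{j=0}^\infty f^\#(\a j)\lm^{\a j}
-\lm^{\a n}\sum_{k=-\infty}^{-1} f^\#(\a k)\lm^{-\a k},
\]
using the symmetry $\binom{\a n}{\a j}=\binom{\a n}{\a(n-j)}$ of the generalised binomial coefficient (\Prop{nikou}). Each infinite sum is then a genuine geometric series on the unit circle, and after the cut-collapse each produces an integrand whose sign is carried \emph{solely} by $\sin\a\pi$, the remaining factor being $t^{\a-1}(1-t)^{\a n}$ times a term of the form $|\,\cdot\,|^{-2}$ (see \Eq{neo3}). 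Kernel positivity is therefore immediate---there is no ``elementary-but-nontrivial inequality'' to verify. If instead you sum the finite series and carry a kernel built from $(1-(\lm/z)^{\a(n+1)})/(z^{\a}-\lm^{\a})$, the contributions from the two edges of the cut do not combine into a single squared modulus, and positivity is no longer transparent; this is precisely the step where your outline becomes speculative.

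Second, a Hankel contour along $(-\infty,0]$ is problematic because $(1+z)^{\a n}$ has its own branch point at $z=-1$; the paper works on the closed unit disc with the slit $[-1,0]$, so only the cut of $z^{\a}$ is encountered and $f(-t)=(1-t)^{\a n}$ stays real on it.

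Finally, your equality analysis is off: once $y=1$, the case $x=0$ gives $S_n(0)=1-\a>0$ for $\a<1$, so equality in \Eq{neo1} with $y=1$ forces $\a=1$ only.
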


When $\a=1$ or $x=y=0$ holds, it is evident that 
inequality~\Eq{neo1} holds with equality. Moreover, when $0<\a<1$ and only one of $x$ and $y$ is $0$, 
inequality~\Eq{neo1} is trivial with strict inequality.
Therefore, it is sufficient to prove 
\Eq{neo1} with strict inequality for $\a\in(0,1)$, $x>0$, and $y>0$.
We may assume that $x\le y$ by symmetry.
By dividing both sides by $y^{\a n}$ and letting $\lm=x/y$,
it is sufficient to prove the following:
for $\a\in(0,1)$, $n\in\N$, and $0<\lm\le 1$,
\begin{equation}\label{eq:neo2}
\a\sum_{j=0}^n\binom{\a n}{\a j}\lm^{\a j}<(1+\lm)^{\a n}.
\end{equation}
In fact, we can prove the following identity.
\begin{thm}[(Generalisation of the binomial theorem)]
\label{th:main2}
Let $\a\in(0,2)$, $n\in\N$, and $0<\lm\le 1$.
Then, we have
\begin{align}\label{eq:neo3}
\a\sum_{j=0}^n\binom{\a n}{\a j}\lm^{\a j}=(1+\lm)^{\a n}-{}&\frac{\a\lm^\a\sin \a\pi}{\pi}\int_0^1 t^{\a-1}(1-t)^{\a n}\nonumber\\
&\times\left\{\frac{1}{|t^\a-\lm^\a e^{-i\a\pi}|^2}+\frac{\lm^{\a n}}{|e^{-i\a\pi}-(\lm t)^\a|^2}\right\}dt.
\end{align}
\end{thm}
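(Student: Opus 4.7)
The identity \eqref{eq:neo3} has the structure ``fractional polynomial approximation + explicit non-negative remainder'', exactly what a fractional-order Taylor expansion should deliver. My plan is therefore to derive such an expansion by complex-analytic means and specialise it to $f(z)=(1+z)^{\a n}$.

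The starting observation is that on the cut plane $\C\setminus(-\infty,0]$, the map $z\mapsto z^\a-\lm^\a$ has a single simple zero at $z=\lm>0$ with derivative $\a\lm^{\a-1}$, so for any $f$ holomorphic in a neighbourhood of a positively oriented contour $\Gamma$ enclosing this zero the residue theorem gives
$$f(\lm)=\frac{1}{2\pi i}\oint_\Gamma\frac{\a z^{\a-1}}{z^\a-\lm^\a}\,f(z)\,dz.$$
Substituting the elementary identity $\frac{1}{1-w}=\sum_{j=0}^{n}w^{j}+\frac{w^{n+1}}{1-w}$ with $w=(\lm/z)^\a$ splits this into
$$f(\lm)=\sum_{j=0}^{n}a_j\lm^{\a j}+R_n(\lm),\qquad a_j=\frac{\a}{2\pi i}\oint_\Gamma\frac{f(z)}{z^{\a j+1}}\,dz,$$
with $R_n$ an explicit contour integral carrying the extra factor $\lm^{\a(n+1)}/[z^{\a n+1}(z^\a-\lm^\a)]$. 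This fractional Taylor formula I would record as a lemma.

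For $f(z)=(1+z)^{\a n}$ (principal branch, cut along $(-\infty,-1]$) I would then deform $\Gamma$ so that it collapses onto the two branch cuts $[-1,0]$ (from $z^{\a j+1}$) and $(-\infty,-1]$ (from $(1+z)^{\a n}$) of the combined integrand. Across each cut the integrand has a pure-phase jump $e^{\pm i(\a j+1)\pi}$ or $e^{\pm i\a n\pi}$, producing real integrals. On the inner cut, the substitution $z=-t$, $t\in(0,1)$, yields $t^{\a-1}(1-t)^{\a n}$ times a sinusoidal jump factor; Euler's reflection formula $\Gamma(s)\Gamma(1-s)=\pi/\sin\pi s$ combined with the Beta integral $\int_0^1 t^{\a j}(1-t)^{\a(n-j)}\,dt$ should then collapse each $a_j$ exactly to $\a\binom{\a n}{\a j}$. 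On the outer cut the substitution $z=-1/(\lm t)$ sends the range to $t\in(0,1)$ and produces the $\lm^{\a n}$-weighted companion term. Applying the same contour deformation to $R_n(\lm)$ gives two pairs of complex-conjugate integrals which combine into the modulus-squared denominators $|t^\a-\lm^\a e^{-i\a\pi}|^2$ and $|e^{-i\a\pi}-(\lm t)^\a|^2$, with the prefactor $\a\lm^\a\sin\a\pi/\pi$ arising from $\a z^{\a-1}$ together with the jump magnitudes.

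\textbf{Main obstacle.} Since $(1+z)^{\a n}/z^{\a j+1}\sim z^{\a(n-j)-1}$ grows at infinity whenever $j<n$, the contour $\Gamma$ cannot be naively pushed through $\infty$; one must either use a Pochhammer-style double loop around the two cuts, or keep track of the non-vanishing contribution from a large circle and verify its cancellation between the $a_j$'s and $R_n$. Managing the branch bookkeeping on both sides of each cut --- in particular pinning down why $e^{-i\a\pi}$ rather than $e^{+i\a\pi}$ appears in the denominators --- is where most of the delicate work lies, and is also what restricts the argument to $\a\in(0,2)$ (so that the two cuts and the phases encountered remain within a single sheet of the logarithm).
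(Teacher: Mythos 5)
Your overall strategy---the kernel $\frac{\a z^{\a-1}}{z^{\a}-\lm^{\a}}$, collapse onto branch cuts, modulus-squared denominators arising from conjugate sides of a cut---is close in spirit to the paper's, but the decomposition you build it on has a genuine gap at its first step. If $\Gamma$ is an honestly closed contour in the cut plane $\C\setminus(-\infty,0]$ enclosing $\lm$, which is what your residue identity requires, then every coefficient $a_j=\frac{\a}{2\pi i}\oint_\Gamma f(z)z^{-\a j-1}\,dz$ vanishes: the cut plane is simply connected and $f(z)z^{-\a j-1}$ is holomorphic there (the cut $(-\infty,-1]$ of $f$ lies inside the cut of $z^{-\a j-1}$), so by Cauchy's theorem your split degenerates to $f(\lm)=0+\cdots+0+R_n(\lm)$ and carries no information. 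To get $a_j=\a\binom{\a n}{\a j}$ you must instead integrate over the \emph{non-closed} circle $C\colon t\mapsto e^{i\pi t}$, $t\in(-1,1)$, pinched at the branch point $-1$ (this is exactly the paper's $f^\#$), and then the term-by-term evaluation you propose breaks down at the origin, not only at infinity: on the inner cut the integrand of $a_j$ is $t^{-\a j-1}(1-t)^{\a n}$ times a phase (exponent $-\a j-1$, not your $\a j$), so the collapsed integrals diverge at $t=0$ for every $j$, the small circle about $0$ contributes $\eps^{-\a j}$, and your remainder kernel is worse still, of size $|z|^{-\a n-1}$ near $0$. The convergent Beta integral $\int_0^1 t^{\a j}(1-t)^{\a(n-j)}\,dt$ you invoke never appears; $\binom{\a n}{\a j}$ corresponds to the analytically continued $B(-\a j,\a n+1)$, and making that rigorous needs a Pochhammer loop or delicate cancellation of divergences between the cuts and the circles, none of which is sketched. (Two smaller slips: to produce $(1-t)^{\a n}$ on the outer cut the substitution is $z=-1/t$, not $z=-1/(\lm t)$; and the true role of $\a<2$ is that $z=\lm$ is then the \emph{only} root of $z^\a=\lm^\a$ in the cut plane---for $\a\ge2$ extra poles $\lm\om$ with $\om^\a=1$ appear, cf.\ Section~3---rather than single-sheet phase bookkeeping.)

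The paper sidesteps all of this by never truncating the geometric series analytically. For $0<\lm<1$ it sums the \emph{full} series $\a\sum_{j\ge0}f^\#(\a j)\lm^{\a j}$ under the integral over $C$, producing the kernel $\frac{\a z^{\a-1}}{z^\a-\lm^\a}$, whose singularity at the origin is integrable (the small circle contributes $O(\eps^{\a})$); closing $C$ through the cut $[-1,0]$ only, staying inside $\bar D$ and never touching $(-\infty,-1]$, yields the residue $f(\lm)$ plus the first integral with denominator $|t^\a-\lm^\a e^{-i\a\pi}|^2$ (\Thm{taylor}). The truncation at $j=n$ is then achieved \emph{arithmetically}: the identity $f^\#(\xi)=\binom{\a n}{\xi}$ for $f(z)=(1+z)^{\a n}$ is quoted as a classical fact (\Prop{nikou}, Pollard--Shisha), so no cut-collapse evaluation of coefficients is ever needed, and its symmetry $f^\#(\xi)=f^\#(\a n-\xi)$ converts the tail $\a\sum_{j>n}f^\#(\a j)\lm^{\a j}$ into $\lm^{\a n}\cdot\a\sum_{k\le-1}f^\#(\a k)\lm^{-\a k}$; this negative-index series sums to the kernel $\frac{\a z^{\a-1}\lm^\a}{1-z^\a\lm^\a}$, holomorphic inside the contour, and the same closure gives precisely the $\lm^{\a n}$-weighted second integral with denominator $|e^{-i\a\pi}-(\lm t)^\a|^2$, with no contribution from infinity to control. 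Finally, the endpoint case $\lm=1$, which is part of the statement and absent from your sketch, is reached by letting $\lm\uparrow1$ with dominated convergence. The missing idea in your proposal is exactly this use of the binomial symmetry $\binom{\a n}{\a j}=\binom{\a n}{\a(n-j)}$ in place of a finite geometric remainder: it is what eliminates both the origin divergence and the outer cut at one stroke.
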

Since the right-hand side of equation~\Eq{neo3} is clearly less than $(1+\lm)^{\a n}$ for $\a\in(0,1)$, \Thm{main2} immediately implies that 
inequality~\Eq{neo2} is valid; therefore, \Thm{main} is proved.
When $\a\in(1,2)$, the right-hand side of 
\Eq{neo3} is greater than $(1+\lm)^{\a n}$.
Therefore, in this case, the converse inequalities of 
\Eq{neo2} and \Eq{neo1} are known as the byproducts.
\Thm{main2} is proved by the application of the fractional order Taylor-like expansions 
with residual terms, which are obtained from the basic theory of complex analysis.

To show that coefficient $\a$ is the best constant, we denote the right-hand side of equation~\Eq{neo3} by $(1+\lm)^{\a n}-R(\a,n,\lm)$.
Then, for fixed $\a\in(0,1)$ and $\lm\in(0,1]$, the error term $R(\a,n,\lm)$ monotonically converges to $0$ as $n\to\infty$ since the integrand decreases to $0$ pointwisely.
In this sense, the constant $\a$ in the left-hand sides of 
\Eq{neo1} and \Eq{neo2} is optimal.
We can also prove that $R(\a,n,\lm)$ is uniformly bounded in $\a\in(0,1]$, $n\in\N$, and $\lm\in(0,1]$ (see \Prop{bound} below).

This paper is 
organised as follows.
In Section~2, we introduce the fractional order Taylor's series and prove \Thm{main2}.
In Section~3, we discuss some 
generalisations of the main theorems.

\section{Fractional order Taylor's series and proof of \Thm{main2}}
\label{sect:2}
Let $D=\{z\in\C\mid |z|<1\}$ be the unit disk in $\C$, and $\bar D$ its closure.
Let $f$ be a continuous function on $\bar D$ such that $f$ is holomorphic in $D$.

For each $\xi\in\R$, we define
\begin{align}
f^\#(\xi)&:=\int_{-1/2}^{1/2}f(e^{2\pi ix})e^{-2\pi ix\xi}\,dx \label{eq:derivative1}\\
&=\frac1{2\pi i}\int_C \frac{f(z)}{z^{\xi+1}}\,dz, \label{eq:derivative2}
\end{align}
where $C$ denotes the oriented contour $(-1,1)\ni t\mapsto\exp(i\pi t)\in\C$.
In \Eq{derivative2}, $z^{\xi+1}$ is defined as $\exp\{(\xi+1)\Log z\}$ on $\C\setminus\{z\in\R\mid z\le0\}$,
where the branch of $\Log$ is taken so that $\Log 1=0$.
It should be noted that $f^\#(\xi)$ is a bounded function in $\xi$.
Then, we have the fractional order Taylor-like series of $f$ with residual terms as follows:
\begin{thm}\label{th:taylor}
For $0<\a<2$ and $0<\lm<1$,
the following identities hold:
\begin{align}\label{eq:taylor1}
\a\sum_{j=0}^\infty f^\#(\a j)\lm^{\a j}
&=f(\lm)
-\frac{\a\lm^\a\sin \a\pi}{\pi}\int_0^1\frac{t^{\a-1}f(-t)}{|t^\a-\lm^\a e^{-i\a\pi}|^2}\,dt,\\
\a\sum_{j=-\infty}^{-1} f^\#(\a j)\lm^{-\a j}
&=\frac{\a\lm^\a\sin \a\pi}{\pi}\int_0^1\frac{t^{\a-1}f(-t)}{|e^{-i\a\pi}-(\lm t)^\a |^2}\,dt.\label{eq:taylor2}
\end{align}
In particular, we have
\begin{align}
&\a\sum_{j=-\infty}^\infty f^\#(\a j)\lm^{\a |j|}\nonumber\\
&=f(\lm)
-\frac{\a\lm^\a(1-\lm^{2\a})\sin \a\pi}{\pi}\int_0^1\frac{t^{\a-1}(1-t^{2\a})f(-t)}{|(t^\a-\lm^\a e^{-i\a\pi})(e^{-i\a\pi}-(\lm t)^\a)|^2}\,dt. \label{eq:taylor3}
\end{align}
If, in addition,
\begin{equation}\label{eq:summable}
\sum_{j=-\infty}^\infty |f^\#(\a j)|<\infty,
\end{equation}
the identities above are valid for $\lm=1$; especially, equation~\Eq{taylor3} becomes
\begin{equation}\label{eq:osler}
\a\sum_{j=-\infty}^\infty f^\#(\a j)=f(1).
\end{equation}
\end{thm}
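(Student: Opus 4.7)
The plan is to prove \Eq{taylor1} first by combining the contour-integral form \Eq{derivative2} of $f^\#$ with residue calculus, to prove \Eq{taylor2} by the analogous calculation (without any residue to collect), to deduce \Eq{taylor3} by adding the two identities and simplifying, and to obtain \Eq{osler} by letting $\lm\to 1^-$ in \Eq{taylor3}.

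For \Eq{taylor1}, I substitute \Eq{derivative2} into the left-hand side. Since $|z|=1>\lm$ on $C$, the geometric series $\sum_{j=0}^{\infty}(\lm^\a/z^\a)^j$ converges absolutely and uniformly on $C$, which justifies interchanging sum and integral and produces $\frac{\a}{2\pi i}\int_C \frac{f(z)z^{\a-1}}{z^\a-\lm^\a}\,dz$. I close $C$ by inserting a thin keyhole along the slit $(-1,0]$, together with a small circle $|z|=\eps$ around the origin. Inside this closed contour the integrand is meromorphic with a single simple pole at $z=\lm$ with residue $f(\lm)/\a$. The inner-circle contribution is $O(\eps^{\a})$ and vanishes as $\eps\to 0$. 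Using the chosen branch $\Log(-t\pm i0^+)=\log t\pm i\pi$, so that $z^\a=t^\a e^{\pm i\a\pi}$ on the two banks, the slit contributions combine into an integral whose integrand involves the difference of the elementary fractions $1/(t^\a-\lm^\a e^{i\a\pi})$ and $1/(t^\a-\lm^\a e^{-i\a\pi})$; since these two denominators are complex conjugates (as $t,\lm\in\R$), the difference equals $2i\lm^\a\sin \a\pi/|t^\a-\lm^\a e^{-i\a\pi}|^2$. Assembling these ingredients via the residue theorem and dividing by $2\pi i$ yields \Eq{taylor1}.

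For \Eq{taylor2}, I rewrite the sum as $\a\sum_{k=1}^\infty f^\#(-\a k)\lm^{\a k}$ and the analogous interchange gives $\frac{\a}{2\pi i}\int_C \frac{\lm^\a f(z)z^{\a-1}}{1-\lm^\a z^\a}\,dz$. Now the zeros of $1-\lm^\a z^\a$ lie at $|z|=\lm^{-1}>1$, so there is no pole inside the keyhole, and only the slit contribution survives; it simplifies as before to the right-hand side of \Eq{taylor2}. Identity \Eq{taylor3} then follows by adding \Eq{taylor1} and \Eq{taylor2} and using the algebraic identity
\begin{equation*}
|t^\a-\lm^\a e^{-i\a\pi}|^2-|e^{-i\a\pi}-(\lm t)^\a|^2=-(1-\lm^{2\a})(1-t^{2\a}),
\end{equation*}
which collapses the sum of the two error integrals into the single error integral on the right-hand side of \Eq{taylor3}. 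For \Eq{osler}, I let $\lm\to 1^-$ in \Eq{taylor3}: under \Eq{summable} dominated convergence handles the left-hand side, continuity of $f$ on $\bar D$ gives $f(\lm)\to f(1)$, and since for $0<\a<2$ the product $|t^\a-\lm^\a e^{-i\a\pi}|^2|e^{-i\a\pi}-(\lm t)^\a|^2$ stays bounded away from zero uniformly in $\lm\in[1/2,1]$ and $t\in[0,1]$, the $(1-\lm^{2\a})$ prefactor forces the error integral to $0$.

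The main technical obstacle is the slit bookkeeping in the proof of \Eq{taylor1}: correctly assigning the two branches of $\Log$ above and below the cut, tracking orientations so that the slit contributions and the residue enter the identity with the right signs, and recognising that the difference of the two elementary fractions on the banks collapses into the symmetric $|\cdot|^2$ denominator that appears on the right-hand side. Once this is done cleanly, \Eq{taylor2}, \Eq{taylor3}, and \Eq{osler} follow by parallel or routine algebraic arguments.
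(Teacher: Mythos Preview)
Your proposal is correct and follows essentially the same route as the paper: summing the geometric series inside the contour integral \Eq{derivative2}, closing $C$ by a keyhole along the negative real axis with a small circle of radius $\eps$ about the origin, applying the residue theorem (pole at $\lm$ for \Eq{taylor1}, none for \Eq{taylor2}), and combining the two slit contributions via the conjugate-denominator observation; \Eq{taylor3} is then the sum of the two, simplified by the same algebraic identity you state. One small omission: the theorem asserts that \emph{all three} identities \Eq{taylor1}--\Eq{taylor3} hold at $\lm=1$ under \Eq{summable}, not just \Eq{osler}; your uniform lower bound on the denominators already gives the dominated-convergence argument needed for the integrals in \Eq{taylor1} and \Eq{taylor2} as well, so you should say so explicitly rather than only treating \Eq{taylor3}.
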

\begin{rem}
From expression \Eq{derivative2}, for $\xi\in\Z$, we have
\[
  f^\#(\xi)=\begin{cases}
  \displaystyle\left.\frac{d^\xi f}{dz^\xi}(0)\right/\xi! & (\xi\ge0),\\
  0 & (\xi<0).
  \end{cases}
\]
Therefore, when $\a=1$, equation~\Eq{taylor1} is identical to the Taylor series expansion of $f$ at $z=0$, and equation~\Eq{taylor2} is reduced to $0=0$.

When $\xi\notin\Z$, $\Gamma(\xi+1)f^\#(\xi)$ is regarded as a sort of `$\xi$-order' fractional derivative of $f$ at $0$, 
which is denoted by $D^\xi_{z+1} f(0)$ in some literatures. 
(It should be noted that we can transform the contour $C$ in \Eq{derivative2} homotopically in 
$\bar D\setminus \{z\in\R\mid z\le0\}$; however, the terminal points $-1$ should be fixed. This is the reason why $+1\,(=-(-1))$ is specified in the symbol $D^\xi_{z+1}$.)
Fractional order Taylor's series have been considered in various frameworks with a variety of fractional order derivatives. 
(For example, see \cite{Ju06,Os70,Os71,Os73} and the references therein.)
\Thm{taylor} is regarded as another variant.
Equation~\Eq{osler} is consistent with the results obtained by Osler~\cite{Os71}.
It should be noted that equation~\Eq{osler} can also be directly obtained by using Poisson's summation formula under the appropriate integrability condition on $f^\#$.
\end{rem}
\begin{rem}
A simple sufficient condition of \Eq{summable} is 
\begin{equation}\label{eq:smooth}
f\in C^2(S^1)\mbox{ and }f(-1)=0,
\end{equation}
where $S^1$ is the unit circle in $\C$. 
Indeed, by letting $h(x)=f(e^{2\pi ix})$ for $x\in[-1/2,1/2]$, the integration by parts in equation~\Eq{derivative1} implies that
\begin{align*}
f^\#(\xi)
&=\left[h(x)\cdot\frac{e^{-2\pi i\xi x}}{-2\pi i\xi}\right]_{x=-1/2}^{x=1/2}
-\int_{-1/2}^{1/2}\frac{h'(x)}{-2\pi i\xi}\left(\frac{e^{-2\pi i\xi x}}{-2\pi i\xi}\right)'dx\\
&=0+
\frac{1}{4\pi^2\xi^2}\left[h'(x)\cdot e^{-2\pi i\xi x}\right]_{x=-1/2}^{x=1/2}
-\frac{1}{4\pi^2\xi^2}\int_{-1/2}^{1/2}h''(x)\cdot e^{-2\pi i\xi x}\,dx\\
&=O(\xi^{-2}) \qquad(|\xi|\to\infty).
\end{align*}

\end{rem}
\begin{proof}[of \Thm{taylor}]
First, we prove equation~\Eq{taylor1}. Since $f$ is bounded on $C$, we have
\begin{align}
\a\sum_{j=0}^\infty f^\#(\a j)\lm^{\a j}
&=\a\sum_{j=0}^\infty\left( \frac1{2\pi i}\int_C \frac{f(z)}{z^{\a j+1}}\,dz\right)\lm^{\a j}\nonumber\\
&=\frac{\a}{2\pi i}\int_C f(z) \left(\sum_{j=0}^\infty z^{-\a j-1}\lm^{\a j}\right)dz\nonumber\\
&=\frac{\a}{2\pi i}\int_C f(z) \frac{z^{\a-1}}{z^{\a}-\lm^\a}\,dz. \label{eq:integral}
\end{align}
We define
\[
  g(z)=\frac{\a}{2\pi i}\cdot f(z) \cdot\frac{z^{\a-1}}{z^{\a}-\lm^\a}
\]
and consider the contour $\Gamma$ described in Figure~\ref{fig:1}.
More specifically, $C'$, $\Gamma_1$, and $\Gamma_2$ are defined as
\begin{align*}
C'&\colon (-1,1)\ni t\mapsto \eps\exp(-i \pi t)\in\C,\\
\Gamma_1&\colon [-1,-\eps]\ni t\mapsto t+i0\in\C,\\
\Gamma_2&\colon [\eps,1]\ni t\mapsto -t-i0\in\C
\end{align*}
for $\eps\in(0,\lm)$.
\begin{figure}[htbp]
\centering
\includegraphics[scale=.9]{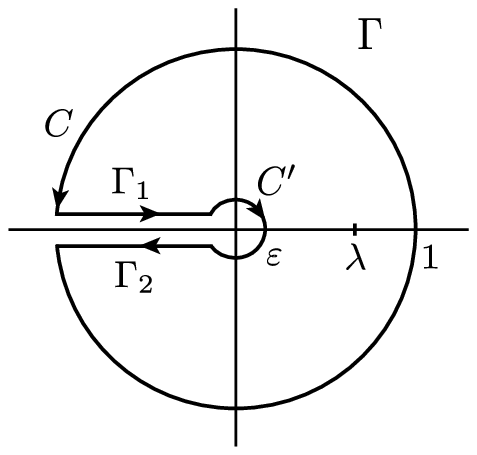}
\caption{Contour $\Gamma=C\cup \Gamma_1\cup C'\cup \Gamma_2$}\label{fig:1}
\end{figure}

In the domain surrounded by the contour $\Gamma$, the function $g$ is holomorphic except at $\lm$ and has the at most first-order pole at $\lambda$. 
(Here, we used the assumption that $0<\a<2$.)
Indeed, we have
\begin{align*}\label{eq:pole}
(z-\lm)g(z)
&=\frac{\a}{2\pi i}\cdot f(z)\cdot\frac{(z-\lm)z^{\a-1}}{z^\a-\lm^\a}\nonumber\\
&\xrightarrow{z\to\lm}
\frac{\a}{2\pi i}\cdot f(\lm)\cdot\frac{\lm^{\a-1}}{(z^\a)'|_{z=\lm}}\nonumber\\
&=\frac{f(\lm)}{2\pi i},
\end{align*}
and the residue of $g$ at $\lm$ is $f(\lm)/(2\pi i)$.
From the residue theorem, we have
\begin{equation}\label{eq:int1}
  \int_{\Gamma}g(z)\,dz
  =2\pi i\cdot\frac{f(\lm)}{2\pi i}=f(\lm).
\end{equation}
On the circle $\{z\in\C\mid |z|=\eps\}$,
\[
|g(z)|
\le \frac{\a}{2\pi}\cdot|f(z)|\cdot\frac{\eps^{\a-1}}{\lm^\a-\eps^\a}=O(\eps^{\a-1})
\quad (\eps\to0).
\]
Therefore,
\begin{equation}\label{eq:int2}
\left|\int_{C'}g(z)\,dz\right|\le\int_{C'}|g(z)|\,|dz|=O(\eps^\a)=o(1)
\quad(\eps\to0).
\end{equation}
Moreover, we have
\begin{align*}
\int_{\Gamma_1}g(z)\,dz
&=\int_1^\eps g(te^{i(\pi-0)})e^{i\pi}\,dt
\qquad(\mbox{by the substitution }z=e^{i(\pi-0)}t)\\
&=\frac{\a}{2\pi i}\int_1^\eps f(-t)\cdot\frac{-t^{\a-1} e^{i\a\pi}}{t^\a e^{i\a\pi}-\lm^\a }\cdot(-1)\,dt\\
&=-\frac{\a}{2\pi i}\int_\eps^1 f(-t)\frac{t^{\a-1}}{t^\a-\lm^\a e^{-i\a\pi}}\,dt
\end{align*}
and
\begin{align*}
\int_{\Gamma_2}g(z)\,dz
&=\int_\eps^1 g(te^{-i(\pi-0)})e^{-i\pi}\,dt
\qquad(\mbox{by the substitution }z=e^{-i(\pi-0)}t)\\
&=\frac{\a}{2\pi i}\int_\eps^1 f(-t)\cdot\frac{-t^{\a-1} e^{-i\a\pi}}{t^\a e^{-i\a\pi}-\lm^\a }\cdot(-1)\,dt\\
&=\frac{\a}{2\pi i}\int_\eps^1 f(-t)\frac{t^{\a-1}}{t^\a-\lm^\a e^{i\a\pi}}\,dt.\end{align*}
Therefore,
\begin{align}\label{eq:int3}
\int_{\Gamma_1\cup\Gamma_2}g(z)\,dz
&=\frac{\a}{2\pi}\int_\eps^1 f(-t)\cdot t^{\a-1}\cdot 2\Re\left[\frac{-1}i\cdot\frac{1}{t^\a-\lm^\a e^{-i\a\pi}}\right]\,dt\nonumber\\
&=\frac{\a}{\pi}\int_\eps^1 f(-t)\cdot t^{\a-1}\cdot \Im\left[\frac{-1}{t^\a-\lm^\a e^{-i\a\pi}}\right]\,dt\nonumber\\
&=\frac{\a}{\pi}\int_\eps^1 f(-t)\cdot t^{\a-1}\cdot \frac{\lm ^\a \sin \a \pi}{|t^\a-\lm^\a e^{-i\a\pi}|^2}\,dt.
\end{align}
Combining equations~\Eq{integral}--\Eq{int3} and letting $\eps\to 0$, we obtain equation~\Eq{taylor1}.

Equation~\Eq{taylor2} is proved along the same lines as the proof of equation~\Eq{taylor1}.
However, in this case, we use the following equality instead of \Eq{integral}:
\begin{align*}
\a\sum_{j=-\infty}^{-1} f^\#(\a j)\lm^{-\a j}
&=\a\sum_{j=-\infty}^{-1}\left(\frac{1}{2\pi i}\int_C \frac{f(z)}{z^{\a j+1}}\,dz\right)\lm^{-\a j}\nonumber\\
&=\frac{\a}{2\pi i}\int_C f(z) \left(\sum_{k=1}^\infty z^{\a k-1}\lm^{\a k}\right)dz\nonumber\\
&=\frac{\a}{2\pi i}\int_C f(z) \frac{z^{\a-1}\lm^\a}{1-z^{\a}\lm^\a}\,dz. 
\end{align*}
The integrand is holomorphic in the domain surrounded by $\Gamma$.
Therefore, we have
\[
\frac{\a}{2\pi i}\int_{\Gamma}f(z)\frac{z^{\a-1}\lm^\a}{1-z^\a\lm^\a}\,dz=0.
\]
Instead of equation~\Eq{int3}, we consider
\begin{align*}
\frac{\a}{2\pi i}\int_{\Gamma_1\cup\Gamma_2}f(z)\frac{z^{\a-1}\lm^\a}{1-z^\a\lm^\a}\,dz
&=\frac{\a}{2\pi}\int_\eps^1 f(-t)\cdot t^{\a-1}\lm^\a\cdot 2\Im\left[\frac{-1}{e^{-i\a\pi}-t^\a\lm^\a}\right]dt\\
&=\frac{\a}{\pi}\int_\eps^1 f(-t)\cdot t^{\a-1}\lm^\a\cdot \frac{-\sin\a\pi}{|e^{-i\a\pi}-(\lm t)^\a|^2}\,dt.
\end{align*}
The other calculations are carried out in the same manner as the proof of equation~\Eq{taylor1}.

Equation~\Eq{taylor3} is simply the sum of equations~\Eq{taylor1} and \Eq{taylor2}, because\begin{align*}
&-\frac{1}{|t^\a-\lm^\a e^{-i\a\pi}|^2}+\frac1{|e^{-i\a\pi}-(\lm t)^\a|^2}\\
&=\frac{-(1-2(\lm t)^\a\cos \a\pi+(\lm t)^{2\a})+(t^{2\a}-2t^\a \lm^\a\cos\a\pi+\lm^{2\a})}{|t^\a-\lm^\a e^{-i\a\pi}|^2|e^{-i\a\pi}-(\lm t)^\a|^2}\\
&=-\frac{(1-\lm^{2\a})(1-t^{2\a})}{|(t^\a-\lm^\a e^{-i\a\pi})(e^{-i\a\pi}-(\lm t)^\a)|^2}.
\end{align*}

When equation~\Eq{summable} holds, by taking the limit $\lm\uparrow1$ in equations~\Eq{taylor1}--\Eq{taylor3}, the dominated convergence theorem assures that these equations are also true for $\lm=1$.
\end{proof}
The following is another key fact for the proof of \Thm{main2}.
\begin{prop}\label{prop:nikou}
Let $T>0$ and define $f(z)=(1+z)^T$ on $\bar D$.
Then, the function $f^\#$ that is defined in 
\Eq{derivative1} is expressed as
\begin{equation}\label{eq:nikou}
f^\#(\xi)=\binom{T}{\xi}\quad \mbox{for }\xi\in\R.
\end{equation}
In particular, we have
\begin{equation}\label{eq:symmetry}
f^\#(\xi)=f^\#(T-\xi)\quad \mbox{for }\xi\in\R.
\end{equation}
\end{prop}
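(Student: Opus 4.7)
The plan is to compute $f^\#(\xi)$ directly from the contour representation \Eq{derivative2} and recognise the result as $\binom{T}{\xi}$. Parametrising $C$ by $z=e^{i\pi t}$ with $t\in(-1,1)$ and using the branch $\Log z=i\pi t$ prescribed by $\Log 1=0$, we obtain $dz/z^{\xi+1}=i\pi e^{-i\pi\xi t}\,dt$, so that
\[
f^\#(\xi)=\frac{1}{2}\int_{-1}^1(1+e^{i\pi t})^T e^{-i\pi\xi t}\,dt.
\]

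Next I would exploit the factorisation $1+e^{i\pi t}=2e^{i\pi t/2}\cos(\pi t/2)$, valid on $(-1,1)$ with $\cos(\pi t/2)>0$. Since this expression is continuous and takes the value $2^T$ at $t=0$ (matching $(1+1)^T=2^T$), the continuous branch of $(1+z)^T$ on $\bar D$ must satisfy $(1+e^{i\pi t})^T=2^T e^{i\pi T t/2}\cos^T(\pi t/2)$. Substituting $u=\pi t/2$ and discarding the odd (sine) part of $e^{i(T-2\xi)u}$ by evenness of $\cos^T u$ gives
\[
f^\#(\xi)=\frac{2^{T+1}}{\pi}\int_0^{\pi/2}\cos^T u\cdot\cos((T-2\xi)u)\,du.
\]
Invoking the classical Cauchy/Beta evaluation
\[
\int_0^{\pi/2}\cos^T u\cos(au)\,du=\frac{\pi\,\Gamma(T+1)}{2^{T+1}\,\Gamma(\frac{T+a}{2}+1)\Gamma(\frac{T-a}{2}+1)}\qquad(T>-1),
\]
with $a=T-2\xi$, for which $(T+a)/2+1=T-\xi+1$ and $(T-a)/2+1=\xi+1$, immediately yields $f^\#(\xi)=\Gamma(T+1)/(\Gamma(\xi+1)\Gamma(T-\xi+1))=\binom{T}{\xi}$. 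Equation \Eq{symmetry} is then a trivial consequence of the manifest invariance of $\binom{T}{\xi}$ under $\xi\leftrightarrow T-\xi$.

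The main obstacle is branch bookkeeping: verifying that $(1+e^{i\pi t})^T=2^T e^{i\pi T t/2}\cos^T(\pi t/2)$ really is the correct continuous extension of $(1+z)^T$ to $\bar D$, compatible with the normalisation $\Log 1=0$ used to define $z^{\xi+1}$. Once this is in place, the remainder is a routine substitution plus one appeal to a classical trigonometric integral. An alternative avoids that integral altogether: deform $C$ to a keyhole contour around $[-1,0]$; for $\xi<0$ the small arc about the origin contributes nothing, and the two banks of the cut combine to give $-(\sin\pi\xi/\pi)\int_0^1(1-s)^T s^{-\xi-1}\,ds$, which equals $\binom{T}{\xi}$ via the Beta integral and the reflection formula $\Gamma(-\xi)\Gamma(\xi+1)=-\pi/\sin\pi\xi$; extension to all $\xi\in\R$ then follows from the identity theorem, since both sides are entire in $\xi\in\C$.
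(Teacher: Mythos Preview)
Your proof is correct. The computation from \Eq{derivative2} to the cosine integral is clean, the branch check for $(1+e^{i\pi t})^T=2^T e^{i\pi Tt/2}\cos^T(\pi t/2)$ is exactly the point that needs care (and you handle it correctly, since $\Re(1+e^{i\pi t})>0$ on $(-1,1)$ so the principal argument is $\pi t/2$), and the Cauchy integral $\int_0^{\pi/2}\cos^T u\,\cos(au)\,du$ is indeed the right classical ingredient. Your alternative keyhole argument is also sound: the sign and the reflection-formula step check out, and the analytic continuation in $\xi$ is legitimate because both $f^\#(\xi)$ (from \Eq{derivative1}) and $\binom{T}{\xi}$ are entire.

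As for comparison: the paper does not actually prove \Prop{nikou}. It records the statement as ``a classical result'' and refers to Pollard--Shisha~\cite{Po72}, adding only that the symmetry \Eq{symmetry} can alternatively be seen directly from the definition via a change of variables. So your write-up supplies what the paper deliberately omits. Your main approach (reduction to the Cauchy cosine integral) is in the spirit of the classical literature the paper cites; the keyhole alternative is essentially the same contour-deformation idea the paper uses elsewhere (in the proof of \Thm{taylor}) specialised to $f(z)=(1+z)^T$, and has the mild advantage of being self-contained within the paper's toolkit.
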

This is a classical result; for example, see \cite{Po72} and the references therein for the proof.
It should be noted that equation~\Eq{symmetry} is evident from expression~\Eq{nikou}; however, it is also directly proved by the definition of $f$ and $f^\#$ and the change of variables.

Now, we prove \Thm{main2}.
\begin{proof}[of \Thm{main2}]
Let $0<\a<2$ and $n\in\N$.
We define $f(z)=(1+z)^{\a n}$ on $\bar D$.
First, assume that $0<\lm<1$.
From \Prop{nikou}, we have
\begin{align}\label{eq:henkei}
\a\sum_{j=0}^n \binom{\a n}{\a j}\lm^{\a j}
&=\a\sum_{j=0}^n f^\#(\a j)\lm^{\a j}\nonumber\\
&=\a\sum_{j=0}^\infty f^\#(\a j)\lm^{\a j}-\a\sum_{j=n+1}^\infty f^\#(\a j)\lm^{\a j}\nonumber\\
&=\a\sum_{j=0}^\infty f^\#(\a j)\lm^{\a j}-\a\sum_{k=-\infty}^{-1} f^\#(\a k)\lm^{-\a k}\lm^{\a n}.
\end{align}
In the last equality, we substituted $k$ for $n-j$ and used the relation $f^\#(\a n-\a k)=f^\#(\a k)$ that is derived from equation~\Eq{symmetry}.
Applying equations~\Eq{taylor1} and \Eq{taylor2} in \Thm{taylor} to \Eq{henkei}, we obtain the identity \Eq{neo3} in \Thm{main2} for $\lm\in(0,1)$.
From the dominated convergence theorem, we can take the limit $\lm\uparrow1$ to conclude that this equation is still true for $\lm=1$.
\end{proof}
\begin{rem}
By using the functional equality $\Gamma(z)\Gamma(1-z)={\pi}/(\sin \pi z)$ and the Stirling formula 
$\Gamma(x)/(\sqrt{2\pi}e^{-x}x^{x-(1/2)})\to1$ 
$(x\in\R,\ x\to+\infty)$, we can prove that 
\[
\binom{T}{\xi}=O(|\xi|^{-T-1})
\quad (\xi\in\R,\ |\xi|\to\infty)
\]
for $T>0$.
Therefore, the condition \Eq{summable} holds for $f(z)=(1+z)^{\a n}$. 
If we use this fact, we do not need to make an exception the case $\lm=1$ in the proof of \Thm{main2}.
Moreover, equation~\Eq{osler} implies that for $0<\a<2$,
\[
\a\sum_{j=-\infty}^\infty \binom{\a n}{\a j}=2^{\a n}.
\]
This identity is a special case of more general results obtained by Osler~\cite[Eq.~(5.1)]{Os71}.
\end{rem}
At the end of this section, we provide a quantitative estimate of the error term $R(\a,n,\lm)$ mentioned in Section~1; that is,
\begin{equation}\label{eq:R}
R(\a,n,\lm):=\frac{\a\lm^\a\sin \a\pi}{\pi}\int_0^1 t^{\a-1}(1-t)^{\a n}\left\{\frac{1}{|t^\a-\lm^\a e^{-i\a\pi}|^2}+\frac{\lm^{\a n}}{|e^{-i\a\pi}-(\lm t)^\a|^2}\right\}dt.
\end{equation}
Since $R(1,n,\lm)\equiv0$, we can suppose that $\a\ne1$.
\begin{prop}\label{prop:bound}
Let $n\in\N$ and $\lm\in(0,1]$.
Then, we have 
\begin{align*}
0&< R(\a,n,\lm)<1-\a<1\phantom{-1}
\mbox{ for }\a\in(0,1)
\intertext{and}
0&> R(\a,n,\lm)>1-\a>-1\phantom{1}
\mbox{ for }\a\in(1,2).
\end{align*}
\end{prop}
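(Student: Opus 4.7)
\textbf{Proof plan for Proposition~\ref{prop:bound}.}
My plan is to combine three facts:
(a) $R(\a,n,\lm)$ has the same sign as $\sin\a\pi$, hence as $1-\a$;
(b) if one allows $n=0$ in formula~\Eq{R}, then $|R(\a,n,\lm)|<|R(\a,0,\lm)|$ for every $n\in\N$;
(c) $R(\a,0,\lm)=1-\a$.
Taken together, these three facts are exactly the two-sided bounds stated, since for $\a\in(0,1)$ one has $R>0$ and $R<1-\a$, while for $\a\in(1,2)$ one has $R<0$ and $R>1-\a$.

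Fact (a) is immediate from \Eq{R}: the integrand
$t^{\a-1}(1-t)^{\a n}\bigl\{|t^\a-\lm^\a e^{-i\a\pi}|^{-2}+\lm^{\a n}|e^{-i\a\pi}-(\lm t)^\a|^{-2}\bigr\}$
is strictly positive on $(0,1)$, and the only sign-bearing factor outside the integral is $\sin\a\pi$, which is positive for $\a\in(0,1)$ and negative for $\a\in(1,2)$.

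For (b), substitute $n=0$ directly into \Eq{R}: the factors $(1-t)^{\a n}$ and $\lm^{\a n}$ are both non-increasing in $n\ge0$, and $(1-t)^{\a n}$ is strictly decreasing for each $t\in(0,1)$ and any $\lm\in(0,1]$. Hence the integrand is pointwise strictly smaller at any $n\in\N$ than at $n=0$, which gives a strict inequality after integration.

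The substantive step is (c). Although \Thm{main2} is stated only for $n\in\N$, its proof carries over unchanged to $n=0$: for $f(z)\equiv(1+z)^0=1$ one computes $f^\#(\xi)=\sin(\pi\xi)/(\pi\xi)$, which by the reflection formula for $\Gamma$ equals $\binom{0}{\xi}$, so \Prop{nikou} (together with its symmetry $f^\#(\xi)=f^\#(-\xi)$) extends painlessly to $T=0$; and \Thm{taylor} requires only continuity on $\bar D$ and holomorphy in $D$, both evidently satisfied by $f\equiv1$. Tracing the computation \Eq{henkei} at $n=0$ then yields $\a\cdot\binom{0}{0}\cdot 1 = 1 - R(\a,0,\lm)$, i.e.\ $R(\a,0,\lm)=1-\a$ for $\lm\in(0,1)$, and extends to $\lm=1$ by continuity of both sides. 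Checking that this $n=0$ extension is legitimate is the only real obstacle; once in hand, Proposition~\ref{prop:bound} follows in one line from (a), (b) and (c).
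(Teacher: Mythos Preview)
Your argument is correct, but it is not the route taken in the paper. The paper compares $R(\a,n,\lm)$ to $R(\a,1,\lm)$ (using the same pointwise monotonicity in $n$ that you use), evaluates $R(\a,1,\lm)=(1+\lm)^\a-\a(1+\lm^\a)$ via \Thm{main2}, and then bounds this by $1-\a$ through an elementary concavity/convexity argument for $(1+\lm)^\a$. You instead compare to the boundary value $n=0$ and show that $R(\a,0,\lm)=1-\a$ \emph{exactly}, by extending the identity of \Thm{main2} to $n=0$ with $f\equiv1$. Your route is tidier: it replaces the ad hoc estimate of $R(\a,1,\lm)$ by a clean exact evaluation, and as a bonus it explains why the bound $1-\a$ is independent of $\lm$. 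The cost is that \Prop{nikou} is stated only for $T>0$ and \Thm{main2} only for $n\in\N$, so you must redo those steps at $T=\a n=0$; your verification that $f^\#(\xi)=\sin(\pi\xi)/(\pi\xi)=\binom{0}{\xi}$ and that \Thm{taylor} applies to $f\equiv1$ is adequate, and the substitution $k=n-j$ in \Eq{henkei} goes through unchanged at $n=0$. The paper's approach, by contrast, stays entirely within the already-established range $n\in\N$ at the price of the extra convexity step.
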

\begin{proof}
First, we suppose that $0<\a<1$.
Then, we have
\begin{align*}
0&< R(\a,n,\lm) \le R(\a,1,\lm)
\qquad\mbox{(from \Eq{R})}\\
&=(1+\lm)^\a-\a\sum_{j=0}^1\binom{\a}{\a j}\lm^{\a j}
\qquad\mbox{(from \Thm{main2})}\\
&=(1+\lm)^\a-\a(1+\lm^{\a})\\
&=\{(1+\lm)^\a-(1+\a\lm)\}+\a(\lm-\lm^\a)+(1-\a).
\end{align*}
Since $(1+\lm)^\a$ is strictly concave in $\lm$ and its derivative at $\lm=0$ is $\a$, the first term in the above equation is less than $0$.
The second term is also dominated by $0$ for $\lm\in(0,1]$.
Therefore, the above equation is less than $1-\a\,(<1)$.
 
The case that $1<\a<2$ is proved in the same manner; in this case, we have
\begin{align*}
0&> R(\a,n,\lm) \ge R(\a,1,\lm)\\
&=\{(1+\lm)^\a-(1+\a\lm)\}+\a(\lm-\lm^\a)+(1-\a).
\end{align*}
Since $(1+\lm)^\a$ is strictly convex in $\lm$, the first term in the above equation is greater than $0$.
The second term is greater than or equal to $0$.
Therefore, the above equation is greater than $1-\a\,(>-1)$.
\end{proof}

\section{Some generalisations}\label{sect:3}
In this section, we discuss some generalisations of Theorems~\ref{th:taylor} and \ref{th:main2}.
For $z\in\C\setminus\{0\}$, we express $z$ as $z=r e^{i\theta}$ ($r>0$, $\theta\in(-\pi,\pi]$) and define $z^\b$ as $z^\b=r^{\b}e^{i\theta \b}$ for $\b\in\R$ as a convention.
The point is that the argument of $z$ is taken in the interval $(-\pi,\pi]$.

For $\a>0$, we define
\[
  K_\a:=\{\om\in\C\mid \om^\a=1\}
  \,(=\{e^{i\theta}\mid -\pi<\theta\le\pi\mbox{ and } e^{i\theta\a}=1\}).
\]
It should be noted that $K_\a=\{1\}$ when $0<\a<2$ and that $-1\in K_\a$ if and only if $\a/2\in\N$.
Then, Theorems~\ref{th:taylor} and \ref{th:main2} are generalised as follows.
\begin{thm}\label{th:taylorA}
Let $f$ be a continuous function on $\bar D$ that is holomorphic in $D$.
Let $\a>0$, $\gm<\a$, and $0<\lm<1$. 
Suppose that $\a/2\notin\N$.
Then, the following identities hold:
\begin{align}\label{eq:taylor1A}
\a\sum_{j=0}^\infty f^\#(\a j+\gm)\lm^{\a j}
&=\sum_{\om\in K_\a}(\lm \om)^{-\gm}f(\lm \om)\nonumber\\
&\quad-\frac{\a}{\pi}\int_0^1 f(-t)t^{\a-\gm-1}\frac{t^\a\sin\gm\pi+\lm^\a\sin(\a-\gm)\pi}{|t^\a-\lm^\a e^{-i\a\pi}|^2}\,dt,\\
\a\sum_{j=-\infty}^{-1} f^\#(\a j+\gm)\lm^{-\a j}
&=\frac{\a\lm^{\a}}{\pi}\int_0^1 f(-t)t^{\a-\gm-1}\frac{\sin(\a-\gm)\pi+(\lm t)^\a\sin\gm\pi}{|e^{-i\a\pi}-(\lm t)^\a |^2}\,dt.\label{eq:taylor2A}
\end{align}
In particular, we have
\begin{align}
&\a\sum_{j=-\infty}^\infty f^\#(\a j+\gm)\lm^{\a |j|}\nonumber\\
&=\sum_{\om\in K_\a}(\lm \om)^{-\gm}f(\lm \om)
-\frac{\a(1-\lm^{2\a})}{\pi}\int_0^1 f(-t)t^{\a-\gm-1}\nonumber\\
&\qquad\times\frac{\lm^\a(1-t^{2\a})\sin(\a-\gm)\pi+t^\a(1-2(\lm t)^\a\cos \a\pi+\lm^{2\a})\sin\gm\pi}{|(t^\a-\lm^\a e^{-i\a\pi})(e^{-i\a\pi}-(\lm t)^\a )|^2}\,dt. \label{eq:taylor3A}
\end{align}
If, in addition,
\begin{equation}\label{eq:summableA}
\sum_{j=-\infty}^\infty |f^\#(\a j+\gm)|<\infty,
\end{equation}
then, the identities \Eq{taylor1A}--\Eq{taylor3A} are valid for $\lm=1$; in particular, equation~\Eq{taylor3A} becomes
\begin{equation}\label{eq:oslerA}
\a\sum_{j=-\infty}^\infty f^\#(\a j+\gm)=\sum_{\om\in K_\a}\om^{-\gm}f(\om).
\end{equation}
When $\a/2\in\N$ and $\gm=0$, the above facts still hold by regarding the second terms of the right-hand sides of equations~\Eq{taylor1A} and \Eq{taylor3A} and the right-hand side of equation~\Eq{taylor2A} as zero.
\end{thm}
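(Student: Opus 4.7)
The plan is to follow, almost line for line, the contour-integration proof of \Thm{taylor}, with three adjustments that absorb the shift $\gm$ and permit arbitrary $\a>0$. Substituting the contour representation \Eq{derivative2} of $f^\#$ into the one-sided sum and interchanging sum and integral (justified on $C$ by the geometric convergence of $(\lm^\a z^{-\a})^j$ when $|z|=1$ and $0<\lm<1$), I would first obtain
\[
\a\sum_{j=0}^\infty f^\#(\a j+\gm)\lm^{\a j}
=\frac{\a}{2\pi i}\int_{C}f(z)\,\frac{z^{\a-\gm-1}}{z^\a-\lm^\a}\,dz,
\]
together with an analogous identity for the negative-indexed sum, whose integrand $f(z)\,z^{\a-\gm-1}\lm^\a/(1-z^\a\lm^\a)$ is holomorphic in a neighbourhood of $\bar D$.

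Next I would deform these integrals onto the same keyhole contour $\Gamma=C\cup\Gamma_1\cup C'\cup\Gamma_2$ used in \Thm{taylor}, with the standard branch cut of $\Log$ along the negative real axis. The crucial new point is that the integrand of the first sum now has simple poles at every point of $\{\lm\om:\om\in K_\a\}$, not only at $z=\lm$. The assumption $\a/2\notin\N$ is precisely what keeps all of these poles off the slit, so all of them are enclosed by $\Gamma$. Using the convention $(\lm\om)^\b=\lm^\b\om^\b$ fixed at the start of Section~\ref{sect:3}, a direct calculation gives
\[
\mathrm{Res}_{z=\lm\om}\!\left[\frac{\a}{2\pi i}f(z)\frac{z^{\a-\gm-1}}{z^\a-\lm^\a}\right]
=\frac{1}{2\pi i}(\lm\om)^{-\gm}f(\lm\om),
\]
so the residue theorem produces the sum $\sum_{\om\in K_\a}(\lm\om)^{-\gm}f(\lm\om)$ that appears on the right-hand side of \Eq{taylor1A}. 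The contribution of the small circle $C'$ is $O(\eps^{\a-\gm})$, which vanishes precisely because of the hypothesis $\gm<\a$.

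Parametrising $z=te^{\pm i\pi}$ on $\Gamma_1,\Gamma_2$ and combining the two pieces as in the proof of \Thm{taylor} reduces the remaining contribution to
\[
\frac{\a}{\pi}\int_{\eps}^{1}f(-t)\,t^{\a-\gm-1}\,
\Im\!\left[\frac{e^{i\gm\pi}}{t^\a-\lm^\a e^{i\a\pi}}\right]dt,
\]
and the trigonometric identity $\Im[e^{i\gm\pi}(t^\a-\lm^\a e^{-i\a\pi})]=t^\a\sin\gm\pi+\lm^\a\sin(\a-\gm)\pi$ delivers exactly the integrand of \Eq{taylor1A}. Equation~\Eq{taylor2A} is handled identically except that its integrand is holomorphic inside $\Gamma$, so no residues contribute, and the companion identity $\Im[e^{i\gm\pi}(e^{-i\a\pi}-(\lm t)^\a)]=-\sin(\a-\gm)\pi-(\lm t)^\a\sin\gm\pi$ produces the stated integrand. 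The symmetric formula \Eq{taylor3A} is then the sum of \Eq{taylor1A} and \Eq{taylor2A} after the same algebraic simplification as at the end of the proof of \Thm{taylor}, and passing to the limit $\lm\uparrow 1$ by dominated convergence under \Eq{summableA} yields \Eq{oslerA}.

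The main obstacle I anticipate is the careful bookkeeping of branches on the two sides of the slit and at each pole $\lm\om$: any sign error in evaluating $z^{\a-\gm-1}$ at $te^{\pm i\pi}$, or in interpreting $(\lm\om)^{-\gm}$ under the convention from Section~\ref{sect:3}, will corrupt the delicate combination $t^\a\sin\gm\pi+\lm^\a\sin(\a-\gm)\pi$. The degenerate case $\a/2\in\N$, $\gm=0$ should require no separate argument: then $\a$ is an even positive integer, so $z^\a$ is entire and the branch cut disappears, while the prefactors $\sin\a\pi$ and $\sin\gm\pi$ both vanish, which collapses the slit integrals to zero and leaves only the residue sum $\sum_{\om\in K_\a}f(\lm\om)$.
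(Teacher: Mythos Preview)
Your proposal is correct and mirrors the paper's proof essentially line for line: the same keyhole contour $\Gamma$, the same residue computation at the poles $\{\lm\om:\om\in K_\a\}$, the same $O(\eps^{\a-\gm})$ bound on $C'$ exploiting $\gm<\a$, and the same trigonometric reduction of the slit contributions (your expression $\Im[e^{i\gm\pi}/(t^\a-\lm^\a e^{i\a\pi})]$ agrees with the paper's $\Im[-e^{-i\gm\pi}/(t^\a-\lm^\a e^{-i\a\pi})]$ by conjugation). One small caveat on the degenerate case $\a/2\in\N$, $\gm=0$: the slit integrals do not literally ``collapse to zero'' from vanishing prefactors, since with $e^{-i\a\pi}=1$ the integrand in \Eq{taylor1A} is $0/0$ at $t=\lm$; the clean argument, which the paper gives and which your own remark that the branch cut disappears already points to, is to apply the residue theorem directly to the integral over $C$ (now a genuine closed contour enclosing all of $\{\lm\om:\om\in K_\a\}$, including $-\lm$), bypassing the keyhole altogether.
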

\begin{thm}\label{th:main2A}
Let $\a>0$, $n\in\N$, and $0<\lm\le 1$.
Then, we have
\begin{align}\label{eq:neo3A}
\a\sum_{j=0}^n\binom{\a n}{\a j}\lm^{\a j}&=\sum_{\om\in K_\a}(1+\lm \om)^{\a n}-\frac{\a\lm^{\a}\sin \a\pi}{\pi}\int_0^1 t^{\a-1}(1-t)^{\a n}\nonumber\\
&\qquad\times\left\{\frac{1}{|t^\a-\lm^\a e^{-i\a\pi}|^2}+\frac{\lm^{\a n}}{|e^{-i\a\pi}-(\lm t)^\a |^2}\right\}\,dt.
\end{align}
Here, the second term on the right-hand side of this equation is regarded as zero if $\a/2\in\N$.

In particular, we have
\begin{align*}
\a\sum_{j=0}^n\binom{\a n}{\a j}\lm^{\a j}-\sum_{\om\in K_\a}(1+\lm \om)^{\a n}\begin{cases}
<0& \mbox{if }2m<\a<2m+1\mbox{ for some }m\in\N\cup\{0\},\\
=0& \mbox{if }\a\in\N,\\
>0& \mbox{if }2m+1<\a<2m+2\mbox{ for some }m\in\N\cup\{0\}.
\end{cases}
\end{align*}
\end{thm}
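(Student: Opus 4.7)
My plan is to follow the proof of \Thm{main2} essentially verbatim, merely replacing \Thm{taylor} by \Thm{taylorA} with $\gamma=0$, and then to read off the sign trichotomy from the sign of $\sin\alpha\pi$. Take $f(z)=(1+z)^{\alpha n}$ on $\bar D$; this is continuous on $\bar D$ and holomorphic in $D$, so \Prop{nikou} (applied with $T=\alpha n$) furnishes both $f^\#(\alpha j)=\binom{\alpha n}{\alpha j}$ and the symmetry $f^\#(\alpha n-\alpha k)=f^\#(\alpha k)$. Setting $\gamma=0$ in \eqref{eq:taylor1A} and \eqref{eq:taylor2A} collapses the two integrands to the symmetric form appearing in \eqref{eq:neo3A}, since $\sin\gamma\pi=0$ and $\sin(\alpha-\gamma)\pi=\sin\alpha\pi$ reduce the numerators to $\lambda^\alpha\sin\alpha\pi$ and $\sin\alpha\pi$ respectively; moreover $f(\lambda\omega)=(1+\lambda\omega)^{\alpha n}$ and $f(-t)=(1-t)^{\alpha n}$ match the remaining factors exactly.

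For $0<\lambda<1$ I then rewrite the finite sum exactly as in \eqref{eq:henkei}:
\begin{align*}
\alpha\sum_{j=0}^n\binom{\alpha n}{\alpha j}\lambda^{\alpha j}
=\alpha\sum_{j=0}^\infty f^\#(\alpha j)\lambda^{\alpha j}
-\lambda^{\alpha n}\alpha\sum_{k=-\infty}^{-1}f^\#(\alpha k)\lambda^{-\alpha k},
\end{align*}
using the substitution $k=n-j$ and the symmetry of $f^\#$ from \Prop{nikou}. Inserting the $\gamma=0$ specialisations of \eqref{eq:taylor1A} and \eqref{eq:taylor2A} then yields \eqref{eq:neo3A}. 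The extension to $\lambda=1$ follows from the dominated convergence clause built into \Thm{taylorA}, since the Stirling-type asymptotic $\binom{\alpha n}{\xi}=O(|\xi|^{-\alpha n-1})$ noted in the remark after the proof of \Thm{main2} secures the summability condition \eqref{eq:summableA} for this particular $f$. When $\alpha/2\in\N$ one has $\sin\alpha\pi=0$, so the entire integral vanishes automatically, consistent both with the convention stated in \Thm{main2A} and with the corresponding clause of \Thm{taylorA}.

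For the sign trichotomy, $t^\alpha-\lambda^\alpha e^{-i\alpha\pi}$ and $e^{-i\alpha\pi}-(\lambda t)^\alpha$ never vanish on $(0,1)$ when $\alpha\notin\N$ (the left-hand sides are real positive while the right-hand sides are not real), so the expression in braces in \eqref{eq:neo3A} is strictly positive throughout $(0,1)$, as is $t^{\alpha-1}(1-t)^{\alpha n}$. Hence the sign of the subtracted integral is exactly that of $\sin\alpha\pi$: strictly positive for $2m<\alpha<2m+1$, strictly negative for $2m+1<\alpha<2m+2$, and zero for $\alpha\in\N$. Transposing $\sum_{\omega\in K_\alpha}(1+\lambda\omega)^{\alpha n}$ in \eqref{eq:neo3A} produces the three stated cases.

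I do not anticipate a substantial obstacle: the real content sits in the already established \Thm{taylorA}, and the remainder is a bookkeeping exercise identical to the proof of \Thm{main2}. The two points needing genuine care are checking that the $\gamma=0$ specialisation of \Thm{taylorA} really reproduces the integrands in \eqref{eq:neo3A}, and observing that the $\alpha/2\in\N$ clauses on both sides are reconciled automatically through the vanishing of $\sin\alpha\pi$.
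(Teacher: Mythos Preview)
Your proposal is correct and follows essentially the same approach as the paper: the paper's proof of \Thm{main2A} is a one-line reference to the proof of \Thm{main2} with \Thm{taylor} replaced by \Thm{taylorA} at $\gamma=0$, and you have filled in exactly those details. Your handling of $\lambda=1$ via the summability condition \eqref{eq:summableA} (rather than taking the limit directly in \eqref{eq:neo3A}) is a harmless variant that the paper itself notes as an alternative in the remark following the proof of \Thm{main2}.
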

\begin{rem}
\begin{enumerate}
\item Note that $\sum_{\om\in K_\a}(1+\lm \om)^{\a n}$ in equation~\Eq{neo3A} is a real number.
\item As in the case of \Thm{taylorA}, we can introduce 
the parameter
$\gm$ in \Thm{main2A}. 
However, since the introduction of $\gm$ only makes the equations complicated, we did not include it.
\end{enumerate}
\end{rem}
\begin{proof}[of \Thm{taylorA}]
The concept of this proof is the same as that of \Thm{taylor}.
We adopt the symbols used there.
Suppose that $0<\lm<1$.
Then, we have
\begin{align}\label{eq:sum1}
\a\sum_{j=0}^\infty f^\#(\a j+\gm)\lm^{\a j}
&=\a\sum_{j=0}^\infty\left(\frac1{2\pi i}\int_C \frac{f(z)}{z^{\a j+\gm+1}}\,dz\right)\lm^{\a j}\nonumber\\
&=\frac{\a}{2\pi i}\int_C f(z)\left(\sum_{j=0}^\infty z^{-\a j-\gm-1} \lm^{\a j}\right)\,dz\nonumber\\
&=\frac{\a}{2\pi i}\int_C f(z)\frac{z^{\a-\gm-1}}{z^{\a}-\lm^\a}\,dz.
\end{align}
Define 
\[
g(z)=\frac{\a}{2\pi i}\cdot f(z)\cdot\frac{z^{\a-\gm-1}}{z^{\a}-\lm^\a}.
\]
Suppose that $\a/2\notin\N$.
Then, $g$ is meromorphic in the domain surrounded by $\Gamma$, as shown in Figure~\ref{fig:1}.
All the poles of $g$ are included in $\{\lm \om\mid \om\in K_\a\}$.
In particular, the poles do not exist on $\Gamma_1\cup \Gamma_2$.
Since for $\om\in K_\a$, we have
\begin{align*}
(z-\lm \om)g(z)\xrightarrow{z\to\lm \om}
&\frac{\a}{2\pi i}\cdot f(\lm \om)\cdot\frac{(\lm \om)^{\a-\gm-1}}{(z^\a)'|_{z=\lm \om}}\\
=&
\;\frac{(\lm \om)^{-\gm}f(\lm \om) }{2\pi i},
\end{align*}
the residue of $g$ at $\lm \om$ is $(\lm \om)^{-\gm}f(\lm \om)/(2\pi i)$.
From the residue theorem, we have
\[
\int_{\Gamma}g(z)\,dz=\sum_{\om\in K_\a}(\lm \om)^{-\gm}f(\lm \om).
\]
As in the proof of \Thm{taylor}, we can prove that
\[
  \int_{C'}g(z)\,dz=O(\eps^{\a-\gm})=o(1)
  \quad\mbox{as }\eps\to0
\]
and
\begin{align*}
\int_{\Gamma_1\cup\Gamma_2}g(z)\,dz
&=\frac{\a}{\pi}\int_\eps^1 f(-t)\cdot t^{\a-\gm-1}
\Im\left[\frac{-e^{-i\gm\pi}}{t^\a-\lm^\a e^{-i\a\pi}}\right]dt\\
&=\frac{\a}{\pi}\int_\eps^1 f(-t)\cdot t^{\a-\gm-1}\cdot
\frac{t^\a\sin\gm\pi+\lm^\a\sin(\a-\gm)\pi}{|t^\a-\lm^\a e^{-i\a\pi}|^2}\,dt.
\end{align*}
These equations imply that equation~\Eq{taylor1A} is valid.
Similarly, we have
\begin{align}\label{eq:sum2}
\a\sum_{j=-\infty}^{-1} f^\#(\a j+\gm)\lm^{-\a j}
&=\a\sum_{j=-\infty}^{-1}\left(\frac{1}{2\pi i}\int_C \frac{f(z)}{z^{\a j+\gm-1}}\,dz\right)\lm^{-\a j}\nonumber\\
&=\frac{\a}{2\pi i}\int_C f(z) \left(\sum_{k=1}^\infty z^{\a k-\gm-1}\lm^{\a k}\right)dz\nonumber\\
&=\frac{\a}{2\pi i}\int_C f(z) \frac{z^{\a-\gm-1}\lm^\a}{1-z^{\a}\lm^\a}\,dz. 
\end{align}
Since the integrand is holomorphic in the domain surrounded by $\Gamma$, we have
\[
\frac{\a}{2\pi i}\int_{\Gamma}f(z)\frac{z^{\a-\gm-1}\lm^\a}{1-z^\a\lm^\a}\,dz=0.
\]
It also follows that the integral along $C'$ is negligible as $\eps\to0$, and 
\begin{align*}
\frac{\a}{2\pi i}\int_{\Gamma_1\cup\Gamma_2}f(z)\frac{z^{\a-\gm-1}\lm^\a}{1-z^\a\lm^\a}\,dz
&=\frac{\a}{\pi}\int_\eps^1 f(-t)\cdot t^{\a-\gm-1}\lm^\a\cdot \Im\left[\frac{-e^{-i\gm\pi}}{e^{-i\a\pi}-t^\a\lm^\a}\right]dt\\
&=-\frac{\a}{\pi}\int_\eps^1 f(-t)\cdot t^{\a-\gm-1}\lm^\a\cdot \frac{\sin(\a-\gm)\pi+(\lm t)^\a\sin\gm\pi}{|e^{-i\a\pi}-(\lm t)^\a|^2}\,dt.
\end{align*}
From these calculations, it is inferred that equation~\Eq{taylor2A} holds.
Equation~\Eq{taylor3A} is obtained by adding equations~\Eq{taylor1A} and \Eq{taylor2A}.

Under the condition \Eq{summableA}, we obtain equations~\Eq{taylor1A}--\Eq{taylor3A} with $\lm=1$ by taking the limit $\lm\uparrow1$ and using the dominated convergence theorem.

When $\a/2\in\N$ and $\gm=0$, the integrands on the right-hand sides of equations~\Eq{sum1} and \Eq{sum2} are meromorphic in $D$ and the poles belong to $\{\lm\om\mid \om\in K_\a\}$. 
Therefore, we can directly apply the residue theorem to the integrals along $C$ in equations~\Eq{sum1} and \Eq{sum2} to obtain equations~\Eq{taylor1A}--\Eq{taylor3A}.
\end{proof}
\begin{rem}
The proof also shows that equation~\Eq{taylor2A} holds for $\gm<\a$ and $0<\lm<1$, even when $\a/2\in \N$ and $\gm\ne0$.
\end{rem}
\begin{proof}[of \Thm{main2A}]
This theorem is proved in the same way as \Thm{main2}; in this case, we use \Thm{taylorA} with $\gm=0$ instead of \Thm{taylor}.
\end{proof}
A table of the correspondence between some concrete functions $f$ and $f^\#$ is found in, for example, \cite{Os70,Os71} with a slightly different terminology.
On the basis of this correspondence, \Thm{taylorA} provides a series of nontrivial functional identities.

\affiliationone{%
   Keisuke Hara\\
   ACCESS Co., Ltd.\\
   Hirata Bldg.\\
   2-8-16, Sarugaku-cho, Chiyoda-ku\\
   Tokyo 101-0064\\
   Japan
   \email{Keisuke.Hara@access-company.com}}
% Important: Do not put any empty line here.
\affiliationtwo{%
   Masanori Hino\\
   Graduate School of Informatics\\
   Kyoto University\\
   Kyoto 606-8501\\
   Japan
   \email{hino@i.kyoto-u.ac.jp}}

\begin{thebibliography}{9}
\bibitem{Ju06} 
{\bibname G. Jumarie},
`Modified Riemann--Liouville derivative and fractional Taylor series of nondifferentiable functions further results',
{\em Comput. Math. Appl. }51 (2006) 1367--1376.
\bibitem{Li04} 
{\bibname J.-H. Li},
`On Lyons' inequality and estimates of convergence rates of approximation via Meyer-K\"onig and Zeller operators',
Master thesis in National Central University, 2004. 
\bibitem{Lo98} 
{\bibname E. R. Love},
`On an inequality conjectured by T. J. Lyons',
{\em J. Inequal. Appl. }2 (1998) 229--233.
\bibitem{Lo00} 
{\bibname E. R. Love},
`An inequality conjectured by T. J. Lyons',  Analytical methods of analysis and differential equations (Minsk, 1999), Integral Transform, {\em Spec. Funct. }10 (2000) 283--288.
\bibitem{Ly98} 
{\bibname T. J. Lyons},
`Differential equations driven by rough signals', 
{\em Rev. Mat. Iberoamericana }14 (1998) 215--310. 
\bibitem{Os70} 
{\bibname T. J. Osler},
`Leibniz rule for fractional derivatives generalized and an application to infinite series',
{\em SIAM J. Appl. Math. }18 (1970) 658--674. 
\bibitem{Os71} 
{\bibname T. J. Osler},
`Taylor's series generalized for fractional derivatives and applications',
{\em SIAM J. Math. Anal. }2 (1971) 37--48.
\bibitem{Os73} 
{\bibname T. J. Osler},
`A correction to Leibniz rule for fractional derivatives',
{\em SIAM J. Math. Anal. }4 (1973) 456--459.
\bibitem{Po72} 
{\bibname H. Pollard \and O. Shisha},
`Variations on the binomial series',
{\em Amer. Math. Monthly }79 (1972) 495--499.
\end{thebibliography}
\end{document}